\begin{document}

\title{Additive quaternary codes related to exceptional linear quaternary codes}
\author{J\"urgen Bierbrauer\\
Department of Mathematical Sciences\\
Michigan Technological University\\
Houghton, Michigan 49931 (USA) \\ \\
S. Marcugini and F. Pambianco \\
Dipartimento di Matematica e Informatica\\
Universit\`a degli Studi di Perugia\\
Perugia (Italy)
\footnote{This research was supported in part by Ministry for Education, University and Research of Italy (MIUR) and by the Italian National Group for Algebraic and Geometric Structures and their Applications (GNSAGA - INDAM).
}}

\maketitle
\newtheorem{Theorem}{Theorem}
\newtheorem{Proposition}{Proposition}
\newtheorem{Lemma}{Lemma}
\newtheorem{Definition}{Definition}
\newtheorem{Corollary}{Corollary}
\newtheorem{Example}{Example}
\def\nz{\mathbb{N}}
\def\gz{\mathbb{Z}}
\def\rz{\mathbb{R}}
\def\ef{\mathbb{F}}
\def\CC{\mathbb{C}}
\def\o{\omega}
\def\p{\overline{\omega}}
\def\e{\epsilon}
\def\a{\alpha}
\def\b{\beta}
\def\g{\gamma}
\def\d{\delta}
\def\l{\lambda}
\def\s{\sigma}
\def\bsl{\backslash}
\def\la{\longrightarrow}
\def\arr{\rightarrow}
\def\ov{\overline}
\def\sm{\setminus}
\newcommand{\D}{\displaystyle}
\newcommand{\T}{\textstyle}

\begin{abstract}
We study additive quaternary codes whose parameters are close to those
of the extended cyclic $[12,6,6]_4$-code or to
the quaternary linear codes
generated by the elliptic quadric in $PG(3,4)$ or its dual.
In particular we characterize those codes in the category of additive codes and
construct some additive codes whose parameters are better than those of any
linear quaternary code. 
\end{abstract}

\noindent {\bf Keywords}: Additive code, linear code.

\section{Introduction}
\label{introsection}

Additive codes are a far-reaching and natural generalization of linear codes,
see also Chapter 18 of \cite{book2nded}. Here we restrict to
the quaternary case which corresponds to choosing the alphabet as the Klein group $Z_2\times Z_2.$
The quaternary case is of special interest, among others because of a close link to the
theory of quantum stabilizer codes and their geometric representations, see~\cite{quantgeom,CRRS,DHY}.

\begin{Definition}
\label{addcodebasicdef}
Let $k$ be such that $2k$ is a positive integer.
An additive quaternary $[n,k]_4$-code ${\cal C}$ (length $n,$
dimension $k$) is a $2k$-dimensional subspace of $\ef_2^{2n},$
where the coordinates come in pairs.
We view the codewords as $n$-tuples where the coordinate entries are elements of $\ef_2^2.$
\par
A {\bf generator matrix} $G$ of ${\cal C}$ is a binary $(2k,2n)$-matrix whose
rows form a basis of the binary vector space ${\cal C}.$
\end{Definition}

\begin{Definition}
Let be ${\cal C}$ an additive $[n,k]_4$-code. The
{\bf weight} of a codeword is the number of its $n$ coordinates where
the entry is different from $00$. The minimum weight (equal to
minimum distance) $d$ of ${\cal C}$  is the smallest weight of its nonzero
codewords. The parameters are then also written  $[n,k,d]$. $A_i$ denotes the number of codewords of weight $i$.
The {\bf strength} of ${\cal C}$ is the largest number $t$ such that all
$(2k,2t)$-submatrices of a generator matrix whose columns
correspond to some $t$ quaternary coordinates have full rank $2t$.
\end{Definition}

The geometric interpretation of an additive $[n,k,d]_4$-code is a multifamily of
$n$ lines in $PG(2k-1,2)$ (the codelines) with the property that any hyperplane of $PG(2k-1,2)$ contains
at most $n-d$ of the codelines. Observe also that codelines may degenerate to points.

Code ${\cal C}$ has minimum distance $d$  if and
only if for each hyperplane $H$ of  $PG(2k-1,2)$ we find at least
$d$ codelines (in the multiset sense), which are not contained in $H$. Strength $t$ means that any set of $t$ codelines is in general position.

As a line not intersecting a hyperplane meets it in a point, it is natural to consider also the
following generalization which may be called mixed quaternary-binary codes:

\begin{Definition}
\label{nmsetdef}
An $(n,m)$-set in $PG(k-1,2)$ is a set of $n$ lines and $m$ points.  It has strength $t$ if
any $t$ of those codeobjects (lines or points) are in general position.
\end{Definition}

A basic theorem by Blokhuis-Brouwer \cite{BB} characterizes the linear among the additive
quaternary codes (see also \cite{book2nded}, Theorem 18.48).
It states that a family of codelines in $PG(2k-1,2)$ generates an $\ef_4$-linear
code if and only if each subset of codelines generates a vector space of even binary dimension.
Among the exceptional linear quaternary codes are the hexacode $[6,3,4]_4$
(geometrically the hyperoval in $PG(2,4),$ see\cite{book2nded}, Section 17.1 and Exercise 3.7.4), the extended cyclic code $[12,6,6]_4$ (see \cite{book2nded}, Section 13.4 and Exercise 17.1.10) and the dual pair of codes
$[17,4,12]_4$ and $[17,13,4]_4$ described by the elliptic quadric in $PG(3,4)$ (see Section 17.3 of \cite{book2nded}).
In the present paper we study the generalization from linear quaternary to additive codes of code parameters
related to those of our exceptional linear codes.
The hexacode case is not interesting. In fact, the parameters $[6,3,4]_4$ show immediately
that any three of the six codelines describing this additive code are in general position. This implies that the
additive code is in fact linear, see \cite{BB}.
Additive $[12,6,6]_4$-codes are studied in Section \ref{1266section}.
We show that the linear $[12,6,6]_4$-code is uniquely determined and that each additive
$[12,6,6]_4$-code has one of two weight distributions.  Either it has the weight distribution of the linear
code or it is a quantum $[[12,0,6]]_2$-code. Such a quantum code is uniquely determined.
It is known as the dodecacode, see \cite{CRRS}.
Section \ref{quadriccharactsection} presents characterizations of the elliptic quadric in $PG(3,4).$
We show that additive $[17,4,12]_4$-codes and additive $[17,13,4]_4$-codes are necessarily $\ef_4$-linear.
\par
In the final Section \ref{lastsection} we consider additive codes of larger length
with parameters related to those of the elliptic quadric.
The nonexistence of an additive $[18,14,4]_4$-code is an
easily obtained consequence
of the characterization of the elliptic quadric. In other words, the additive relaxation of an $18$-cap in
$PG(3,4)$ does not exist. Finally we construct $[22,17.5,4]_4$-codes.
From them, $[19+i,14.5+i,4]_4$-codes $i=0, \dots, 2$ can be obtained by shortening. Such additive codes are new and their parameters cannot be reached by linear codes.
For the parameters of optimal additive codes of small dimension see \cite{BBFMP} and the references therein.
For simplicity we denote $PG(i-1,2)$ also as $V_i.$

\section{Additive $[12,6,6]_4$-codes}
\label{1266section}

\begin{Theorem}
\label{1266lineartheorem}
A linear $[12,6,6]_4$-code is uniquely determined.
\end{Theorem}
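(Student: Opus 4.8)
The plan is to prove uniqueness by invoking the MacWilliams-type rigidity of the code's weight enumerator together with the classification of the underlying combinatorial object. Since the code is linear over $\ef_4$, Theorem 18.48 of \cite{book2nded} (Blokhuis--Brouwer) guarantees that every subset of the twelve codelines spans a binary space of even dimension, so we may work entirely in the $\ef_4$-linear category and treat the code as a genuine $[12,6,6]_4$-code in the classical sense. The first step is to pin down the weight distribution. Because the parameters $[12,6,6]_4$ meet the Griesmer bound (or are otherwise forced), I would argue that the code is self-dual, or at least that its dual has minimum distance large enough that the MacWilliams identities, combined with the constraints $A_0=1$, $A_i=0$ for $1\le i\le 5$, and $\sum_i A_i = 4^6$, determine all the $A_i$ uniquely. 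The key numerical input is that a $[12,6,6]_4$-code is a self-dual (more precisely Hermitian self-dual) code, which forces its weight enumerator to lie in a low-dimensional invariant ring; checking that the dimension of that space of admissible enumerators is $1$ after imposing $d=6$ gives a single candidate weight distribution.

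With the weight distribution fixed, the second step is to reconstruct the code from its geometry. I would show that the $A_6=22$ (or whatever value the computation yields) minimum-weight words, viewed via their supports, form a highly structured design — most likely a $2$- or $3$-design on the twelve coordinates — and that the residual/shortened codes inherit enough structure to be identified with the hexacode $[6,3,4]_4$ discussed earlier. Concretely, shortening the $[12,6,6]_4$-code at a coordinate, or puncturing and taking residuals with respect to a minimum-weight word, should produce shorter codes whose parameters are rigid enough (by earlier remarks, the hexacode is unique and forces linearity) to be identified uniquely; propagating this identification back up reconstructs a generator matrix of the original code up to monomial equivalence.

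The final step is to verify that any two codes with this data are equivalent. Here I would set up a generator matrix in standard form $[\,I_6 \mid M\,]$ over $\ef_4$ and use the weight and self-duality constraints to show that $M$ is determined up to the action of the monomial group $(\ef_4^{*})^{12}\rtimes S_{12}$ together with the Frobenius automorphism of $\ef_4$. The self-duality condition $M M^{\dagger} = -I$ (with $\dagger$ the conjugate transpose) drastically restricts $M$, and the minimum-distance condition removes the remaining freedom.

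The hard part will be the reconstruction in the middle step: showing that the combinatorial/geometric data attached to the weight-$6$ words is rigid enough to force a unique code up to equivalence, rather than merely a unique weight distribution. Establishing that the candidate enumerator is realized by exactly one equivalence class — as opposed to several inequivalent codes sharing it — is where the real content lies, and I expect this to require either an explicit coordinatization argument (identifying the code with the extended cyclic construction of \cite{book2nded}, Section 13.4) or an orderly enumeration over the restricted space of matrices $M$ cut out by the self-duality and distance constraints.
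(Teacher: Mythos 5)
Your proposal has a genuine gap at its foundation: the claim that a linear $[12,6,6]_4$-code is Hermitian self-dual is unjustified and in fact false. A Hermitian self-dual code over $\ef_4$ has only even-weight codewords, but the unique linear $[12,6,6]_4$-code has $A_7=396$, $A_9=1320$, $A_{11}=396$; so no self-duality condition of the form $MM^{\dagger}=-I$ is available, and the invariant-theoretic rigidity you want to invoke does not apply. What is true (and what the paper proves by a geometric counting argument) is only that the code has strength $5$, i.e.\ its \emph{dual} has the same parameters; this formal duality feeds into the MacWilliams identities but does not pin down the weight enumerator. Indeed, the constraints $A_0=1$, $A_1=\dots=A_5=0$, $\sum_i A_i=4^6$, the MacWilliams transform again vanishing in degrees $1,\dots,5$, and even divisibility of each $A_i$ ($i>0$) by $3$ admit at least two solutions: the linear distribution above and the ``quantum'' distribution $A_0=1$, $A_6=396$, $A_8=1485$, $A_{10}=1980$, $A_{12}=234$, which the paper shows is realized by an additive (non-linear) code, the dodecacode. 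So your Step 1 cannot produce a single candidate enumerator; excluding the quantum distribution for \emph{linear} codes is itself a substantial part of the proof. The paper does exactly this by showing that $A_7=0$ forces a contradiction through an explicit generator-matrix analysis (a weight-$7$ word appears in the codeword $\o z_2+z_3$), not through enumerator arithmetic.

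Beyond that, your middle step --- reconstructing the code from the supports of weight-$6$ words via designs and residual codes --- is only a plan, and you acknowledge it is where the content lies. The paper's route is different and concrete: put the generator matrix in the form $\left(\begin{array}{c|c} I & P\end{array}\right)$, split on whether $A_7=0$ and on the number $\nu$ of zero entries of $P$, observe that three rows of $P$ form a generator matrix of the hexacode, exploit the symmetry group of that hexacode to normalize the remaining rows, and finish the surviving cases by small computer searches. If you want a computer-free or structurally cleaner argument, you would at minimum need to (i) rule out the quantum enumerator for linear codes by some argument replacing the paper's explicit one, and (ii) make the design-theoretic reconstruction precise enough to show there is exactly one equivalence class, neither of which your sketch currently does.
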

\begin{proof}
Let us start using geometric arguments. The ambient space is $PG(5,4),$ each
hyperplane contains at most $6$ codepoints, each secundum $PG(3,4)$ contains
at most $4$ codepoints. This shows that any four codepoints do generate a secundum
and any five codepoints generate a hyperplane. In particular the strength is $5,$ in other words:
the dual code has the same parameters $[12,6,6]_4.$
\par
Let $S$ be a secundum containing $4$ codepoints. There are two possibilities for the
distribution of codepoints on the hyperplanes containing $S$.
Either two of those hyperplanes contain one more codepoint, the others contain
two more codepoints or one of the $5$ hyperplanes containing $S$ has no further codepoint,
the four remaining ones contain two more codepoints each.
We try to construct a generator matrix $G=\left(\begin{array}{c|c}
I & P
\end{array}\right)$ where $P=(p_{ij})$ is a $(6,6)$-matrix.
Denote by $z_i$ the rows of the generator matrix, by $v_i,i=1,\dots ,6$ the rows of $P.$
\par
Let us consider the case $A_7=0,$ equivalently no
hyperplane contains precisely $5$ codepoints, again equivalently
the hyperplanes containing 6 codepoints form the blocks of a Steiner system $S(5,6,12).$

The usual arguments show that we can choose

$$P=\left(\begin{array}{c|c|c|c|c|c}
0 & 1  & 1  & 1   & 1 & 1  \\
1 & 0  &  & &  &   \\
1 &     & 0  &  &   &   \\
1 &     &  & 0 &  &  \\
1 &     &  &   & 0 &   \\
1 &     &  &  &  &  0 \\
\end{array}\right).$$

Codewords $\l z_1+z_2$ show that each (nonzero) entry occurring among the
$p_{23},p_{24},p_{25},p_{26}$ occurs precisely twice. The same holds for each of the
rows $v_i,i>1.$
We can choose

$$P=\left(\begin{array}{c|c|c|c|c|c}
0 & 1  & 1  & 1   & 1 & 1  \\
1 & 0  &  a & a & b &  b \\
1 &     & 0  &  &   &   \\
1 &     &  & 0 &  &  \\
1 &     &  &   & 0 &   \\
1 &     &  &  &  &  0 \\
\end{array}\right)$$

\noindent where $a,b$ are different nonzero elements. Clearly we can choose notation such that
the corresponding nonzero entries in $v_3$ are the same, $a$ and $b.$
We must have $p_{35}\not=p_{36}$ as otherwise there is a linear relation involving the
last two columns of $P$ and the three last columns of the unit matrix. We can choose

$$P=\left(\begin{array}{c|c|c|c|c|c}
0 & 1  & 1  & 1   & 1 & 1  \\
1 & 0  &  a & a & b &  b \\
1 &     & 0  &  & a  &  b \\
1 &     &  & 0 &  &  \\
1 &     &  &   & 0 &   \\
1 &     &  &  &  &  0 \\
\end{array}\right).$$

The codeword $z_2+z_3$ shows $p_{34}=b.$ We have

$$P=\left(\begin{array}{c|c|c|c|c|c}
0 & 1  & 1  & 1   & 1 & 1  \\
1 & 0  &  a & a & b &  b \\
1 &  a  & 0  & b & a  &  b \\
1 &     &  & 0 &  &  \\
1 &     &  &   & 0 &   \\
1 &     &  &  &  &  0 \\
\end{array}\right).$$

Now
$$\o z_2+z_3=(0,\o ,1,0,0,0,\p ,a,\o a,\o a+b,\o b+a,\p b).$$
Exactly one of $\o a+b$ and $\o b+a$ is nonzero. This shows that we have a
codeword of weight $7,$ contradiction.
 \par
We have $A_7\not=0,$ equivalently there is a
hyperplane with precisely $5$ codepoints, equivalently there is a secundum for which the
first of the two possibilities above is satisfied. Let us choose $S=(x_1=x_2=0)$ as this secundum.
The two hyperplanes containing $S$ and containing $5$ codepoints each are
$H_1=(x_1=0)$ and $H_2=(x_2=0).$ This means that $v_1,v_2$ have no entry $0.$
Comparing $z_1$ and $z_2$ we see that we can choose
$v_1=(1,1,1,1,1,1), v_2=(1,1,\o ,\o ,\p ,\p ).$
Let $\nu$ be the total number of entries $0$ in $P.$ As each row of $P$ has at most one
such entry it follows $\nu\leq 4.$
Assume at first $\nu\leq 3.$ Choose notation such that $v_3$ has no entry $0.$
Codewords $z_3+\l z_1$ show that each nonzero element occurs precisely twice in $v_3.$
We can choose $p_{31}=1.$
As $d(v_2,v_3)=4$ we can choose notation such that $p_{33}=\o .$
Strength $5$ shows $p_{32}\not=1 .$ Comparison with $v_2$ shows
$p_{32}\not=\o $ (as otherwise $v_2,v_3$ would agree in one of the last
two coordinates). It follows $p_{32}=\p  $ and $p_{34}=\p .$
By symmetry we can choose $p_{35}=1, p_{36}=\o .$ We have

$$P=\left(\begin{array}{c|c|c|c|c|c}
1 & 1 & 1 & 1 & 1 & 1  \\
1 & 1 & \o & \o & \p & \p  \\
1  & \p & \o & \p & 1 &  \o \\
  &  &  &  &  &  \\
  &  &  &  &  &  \\
  &  &  &  &  &  \\
\end{array}\right).$$

Observe that rows $v_1,v_2,v_3$ of $P$ form a generator matrix of the hexacode $[6,3,4]_4.$
Assume now $\nu\leq 2$ and choose notation such that $v_4$ has weight $6, p_{41}=1 .$
Strength $5$ shows $p_{42}\not=1 , p_{45}\not=1, p_{43}\not=\o  .$
Assume $p_{43}=1.$ Then $p_{46}=\p , p_{45}=\o$ and
$v_4=(1,\o ,1,\p ,\o ,\p ).$ However $v_1+v_4+\o (v_2+v_3)=0$ yields a
codeword of weight $4.$
We have $p_{43}=\p .$ Again $v_4$ is then uniquely determined,
$v_4=(1,\p ,\p ,\o ,\o ,1)$ and $\o (v_1+v_2)+v_3+v_4=0,$ contradiction.
It follows $\nu =3$ or $\nu =4.$
\par
Assume at first $\nu =3.$
We have the first three rows of $P$ as before (forming a generator matrix $G$ of the hexacode),
and each subsequent row of $P$ contains one entry $0.$
Denote the columns of $G$ by $s_1,\dots ,s_6,$ where $s_6=\tilde{s}_4$ and  $s_1=\tilde{s}_1.$
Consider transformations $g(\b ,\g )$ defined by
$$v_1\mapsto v_1, v_2\mapsto\b v_2, v_3\mapsto\g v_3.$$
Then $g(1,\o )$ maps
$$(s_1,\dots ,s_6)\mapsto (\tilde{s}_2,s_1,s_4,\tilde{s}_5,s_6,s_4).$$
After taking conjugates and rearranging columns this leads back to $G.$
This maps a candidate row $w=(a,b,c,d,e,f)$ to $(\tilde{b},\tilde{a},\tilde{e},\tilde{f},\tilde{d},\tilde{c}).$
This describes a permutation of order $4.$ In the same manner
$g(\p ,\p )$ describes the involution $w=(a,b,c,d,e,f)\mapsto (\tilde{c},\tilde{d},\tilde{a},\tilde{b},\tilde{f},\tilde{e}).$
Consider $(g(1,\o )g(\p ,\p ))^2.$ It maps $w\mapsto (d,c,f,e,b,a),$ a permutation of order $6.$
This shows that we can choose $p_{41}=0.$
\par
The permutation $\rho :v_1\mapsto v_2\mapsto v_3\mapsto v_1$ maps
$w=(a,b,c,d,e,f)\mapsto (a,\p c,\o e,\p d,b,\o f)$
and the involution $\s :v_1\mapsto v_1, v_2\mapsto v_3\mapsto v_2$ maps
$w\mapsto (a,e,c,f,b,d).$ Clearly $\langle\rho ,\s\rangle\cong S_3.$
\par
A direct check shows that there are only three vectors $w$ with first entry $0,$
one entry $1$ and two entries $\o$ as well as two entries $\p$ which may be used as $v_4$
(such that the corresponding four-dimensional code has minimum distance $6$).
Those are
$$w=(0,\o ,1,\p ,\o ,\p ), (0,\o ,\p ,1 ,\p ,\o ), (0,\p ,\p ,\o ,\o ,1 ).$$
They form an orbit under the group $S_3$ above. It follows that we may choose
$v_4$ as the first of those vectors. We have

$$P=\left(\begin{array}{c|c|c|c|c|c}
1 & 1 & 1 & 1 & 1 & 1  \\
1 & 1 & \o & \o & \p & \p  \\
1  & \p & \o & \p & 1 &  \o \\
0  & \o & 1  & \p & \o &  \p \\
  &  &  &  &  &  \\
  &  &  &  &  &  \\
\end{array}\right).$$

A little computer program shows that this cannot be completed.
\par
Assume finally that $\nu =4.$ The first two rows of $P$ are as before.
Clearly we can assume $p_{31}=0.$
Each of the rows $v_i,i>2$ contains one nonzero field element with
frequency $1,$ each of the others with frequency $2.$ By an obvious
operation we can assume that the field element $1$ occurs with
frequency $1$ in each $v_i,i>2.$
It is impossible that $p_{32}=1$ as in this case all entries of
$v_2+v_1$ are $0$ or $1$ which yields a contradiction.
Conjugation shows that we may choose
$p_{32}=\o .$
Assume $p_{35}=1.$ Then $v_3=(0,\o ,\o ,\p ,1,\p )$ which yields
$wt(\o v_2+v_3)=3,$ contradiction. We can choose $p_{33}=1$
and $v_3=(0,\o ,1,\p ,\o ,\p ).$
We have

 $$P=\left(\begin{array}{c|c|c|c|c|c}
1 & 1 & 1 & 1 & 1 & 1  \\
1 & 1 & \o & \o  & \p & \p  \\
0  & \o & 1 & \p & \o &  \p \\
  &  &  &  &  &   \\
  &  &  &  &  &  \\
  &  &  &  &  &  \\
\end{array}\right).$$

A computer program shows that this can be completed in precisely
$12$ ways. The resulting matrices have their zeroes in the last three rows
in columns either $2,3,4$ or $2,5,6.$
The linear mapping $\tau :v_1\mapsto v_2\mapsto v_1, v_3\mapsto v_3$ induces
$\tau (w)=(a,b,\o f,\o e,\p d,\p c).$ We can therefore assume that the
zeroes occur in positions $2,3,4$ in the last three rows of $P.$
Those six matrices generate equivalent codes. We conclude that the
linear $[12,6,6]_4$-code is uniquely determined. Here is one example:

 $$P=\left(\begin{array}{c|c|c|c|c|c}
1  & 1 & 1 & 1 & 1 & 1  \\
1   & 1   & \o  & \o  & \p & \p  \\
0   & \o  & 1   & \p & \o  &  \p \\
\p  &  0  &  \p & \o & \o &  1 \\
\p  &  \o & 0   & \o & 1  &  \p \\
\o  & 1   &  \p & 0  &  \o &  \p \\
\end{array}\right).$$
\end{proof}

The weight distribution of this uniquely determined quaternary linear $[12,6,6]_4$-code is

$$A_0=1, A_6=330, A_7=396, A_{8}=495, A_{9}=1320, $$
$$A_{10}=990, A_{11}=396, A_{12}=168.$$

Let us consider additive codes with these parameters.
In fact, we work with the dual, a $[12,6]_4$-code of strength $5.$
Projection to some $7$ codelines generating the ambient space yields a $[7,6]$-code of strength $5.$
Its dual is a $[7,1,6]$-code. Clearly those codes can be characterized.
There may be a 0-coordinate or up to three points. This leads to the following
list of $[7,1,6]$-codes.
In each of these 5 cases we determined the dual
and the sets of $5$ lines in $PG(11,2)$
which complete this to a code of strength $5.$
The first configuration

$$\left(\begin{array}{c|c|c|c|c|c|c}
L_1 & L_2 & L_3 & L_4 & L_5 & L_6 & L_7 \\
10  & 10 & 10 & 10 & 10 & 10 & 00 \\
01  & 01 & 01 & 01 & 01 & 01 & 00 \\
\end{array}\right)$$

\noindent leads to completions with two different weight distributions:
$${\bf quantum:} \ \ A_0=1, A_6=396, A_8=1485, A_{10}=1980, A_{12}=234;$$
$${\bf linear:}\ \ A_0=1, A_6=330, A_7=396, A_{8}=495, A_{9}=1320, $$
$$A_{10}=990, A_{11}=396, A_{12}=168.$$
The latter is the weight distribution of the linear code.
Refer to these weight distributions as the quantum case and
the quaternary linear case, respectively.
The second configuration

$$\left(\begin{array}{c|c|c|c|c|c|c}
L_1 & L_2 & L_3 & L_4 & L_5 & L_6 & L_7 \\
10  & 10 & 10 & 10 & 10 & 10 & 00 \\
01  & 01 & 01 & 01 & 01 & 01 & 01 \\
\end{array}\right)$$

\noindent and the third configuration

$$\left(\begin{array}{c|c|c|c|c|c|c}
L_1 & L_2 & L_3 & L_4 & L_5 & L_6 & L_7 \\
10  & 10 & 10 & 10 & 10 & 01 & 00 \\
01  & 01 & 01 & 01 & 01 & 00 & 01 \\
\end{array}\right)$$

\noindent do not produce codes. The fourth configuration

$$\left(\begin{array}{c|c|c|c|c|c|c}
L_1 & L_2 & L_3 & L_4 & L_5 & L_6 & L_7 \\
10  & 10 & 10 & 10 & 01 & 01 & 00 \\
01  & 01 & 01 & 01 & 01 & 00 & 01 \\
\end{array}\right)$$

\noindent produces a code with the weight
distribution of the quantum case, the fifth configuration

$$\left(\begin{array}{c|c|c|c|c|c|c}
L_1 & L_2 & L_3 & L_4 & L_5 & L_6 & L_7 \\
10  & 10 & 10 & 10 & 10 & 10 & 10 \\
01  & 01 & 01 & 01 & 01 & 01 & 01 \\
\end{array}\right)$$

\noindent yields  codes with
the weight distribution of the linear case.
Observe that we do not have a complete classification.
Our result is as follows:

\begin{Theorem}
\label{1266addtheorem}
Each additive $[12,6,6]_4$-code
has either the weight distribution of the linear code
or the weight distribution of the quantum case.
\end{Theorem}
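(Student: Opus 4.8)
The plan is to turn the statement into a finite case analysis by means of two reductions: passing to the dual, and then projecting onto a short set of coordinates. Since ${\cal C}$ has minimum distance $6$, its dual ${\cal C}^\perp$ is a $[12,6]_4$-code of strength $5$, and I would work throughout with ${\cal D}:={\cal C}^\perp$. The weight distribution of ${\cal C}$ is the MacWilliams transform of that of ${\cal D}$, so it suffices to determine the possible weight distributions of a strength-$5$ code ${\cal D}$ and then to transform back.

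The $12$ codelines of ${\cal D}$ span $PG(11,2)$, so I can single out $7$ of them that already generate the ambient space and restrict ${\cal D}$ to these $7$ coordinates, obtaining a $[7,6]$-code. Strength is inherited under such a restriction, since any $5$ of the $7$ chosen coordinates are a fortiori $5$ coordinates of ${\cal D}$; hence the short code has strength at least $5$, and its dual is a $[7,1]$-code of minimum distance at least $6$. Such a $[7,1]$-code has only three nonzero codewords, so each carries at most one zero coordinate: a zero column forces a zero in all three codewords and therefore excludes any further zero column as well as any point column, whereas in the absence of a zero column at most three point columns can be accommodated (each point column zeroes out exactly one of the three codewords). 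This bookkeeping produces precisely the five configurations listed above, and it is the structural crux of the argument: every strength-$5$ $[12,6]$-code, viewed through any spanning $7$-subset of its codelines, must dualize to one of these five short codes.

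For each of the five configurations I would then enumerate all completions, that is, all ways of adjoining $5$ further codelines in $PG(11,2)$ so that the resulting $[12,6]$-code has strength $5$, computing the weight distribution in every case. This is the computational heart of the proof: the second and third configurations admit no completion, the first completes to codes of exactly two weight distributions (the quantum and the linear one), the fourth yields only the quantum distribution, and the fifth only the linear one. The analysis must be exhaustive, so that no third distribution can slip through and so that the emptiness of the second and third cases is genuine; I expect this finite but sizeable search to be the main obstacle, and it is where a short computer enumeration is unavoidable, the rest being the five-way split above.

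Finally I would return from ${\cal D}$ to ${\cal C}$. A direct check shows that each of the quantum and linear weight distributions is fixed by the MacWilliams transform, so ${\cal C}$ inherits the weight distribution of ${\cal D}$ and is therefore of one of the two claimed types. This closes the argument.
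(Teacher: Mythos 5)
Your proposal follows the paper's own proof essentially step for step: dualize to a strength-$5$ $[12,6]_4$-code, project onto seven codelines spanning $PG(11,2)$, classify the dual $[7,1,6]$-codes into the same five configurations via the zero-column/point-column bookkeeping, and settle each configuration by an exhaustive computer search of completions, with exactly the same outcome in each of the five cases. The only (harmless) addition is your explicit MacWilliams step at the end, which the paper leaves implicit because both weight distributions are fixed points of the transform (the linear code's dual is again the unique linear $[12,6,6]_4$-code, and the quantum case is self-dual with respect to the symplectic form).
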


Each additive  $[12,6,6]_4$-code having the weight
distribution of the quantum case is in fact a
$[[12,0,6]]_2$ quantum code, equivalently
it is self-dual with respect to the symplectic form.
It is known that this quantum code is uniquely determined:
the dodecacode.
It is not clear if
all additive  $[12,6,6]_4$-codes having the weight
distribution of the linear case are necessarily $\ef_4$-linear.

\section{Characterizations of the elliptic quadric}
\label{quadriccharactsection}

The elliptic quadric in $PG(3,4)$ is the uniquely determined $17$-cap in $PG(3,4).$
Using representatives of those points as columns
of a $(4,17)$-matrix with entries in $\ef_4,$ we obtain a generator matrix $M$ of a linear
$[17,4,12]_4$-code whose dual is a $[17,13,4]_4$-code. Both those codes have optimal parameters.
As $17$ is the largest size of a cap in $PG(3,4),$ there is no linear $[18,14,4]_4$-code.
The elliptic quadric has a large sphere of influence. In fact, linear codes
$[16,12,4]_4$ and $[15,11,4]_4$ correspond to caps of sizes $15$ and $14$
respectively, and it is known that such caps are contained in the elliptic quadric
(see \cite{cap4}). It follows that linear codes with those parameters are uniquely determined.
By what we observed above it will be sufficient to prove that such additive codes are
necessarily $\ef_4$-linear. Here the characterization from \cite{BB}
is useful again.
Observe also that concatenation of an additive $[n,k,d]_4$-code with the binary linear
$[3,2,2]_2$-code yields a binary linear $[3n,2k,2d]_2$-code. This fact can be used to
obtain non-existence results.

\begin{Theorem}
\label{17412theorem}
An additive $[17,4,12]_4$-code is uniquely determined.
\end{Theorem}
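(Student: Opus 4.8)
The plan is to show that every additive $[17,4,12]_4$-code is $\ef_4$-linear; combined with the uniqueness of the linear $[17,4,12]_4$-code (the elliptic quadric), this gives the theorem. Geometrically we have $17$ codelines in $V_8=PG(7,2)$, every hyperplane containing at most $5$ of them. I would first establish that such a code has strength $3$: no codeline degenerates to a point, any two codelines are skew, and any three are in general position (equivalently the dual $[17,13]_4$-code has minimum distance $4$). A non-skew pair lies in a space of dimension $\le 3$, and a short averaging argument over the hyperplanes through that space produces a hyperplane with at least $6$ codelines, a contradiction; the exclusion of a triple spanning only dimension $5$ is more delicate and I expect it to need a dedicated counting bound. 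In particular every subset of at most three codelines then spans a binary space of even dimension $0,2,4$ or $6$.

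The weight distribution is forced by a hyperplane incidence count. Let $b_i$ be the number of hyperplanes of $V_8$ meeting exactly $i$ codelines, so $0\le i\le 5$ and $A_{17-i}=b_i$. Using that a codeline lies in $63$ hyperplanes, a skew pair in $15$, and a general triple in $3$, strength $3$ gives the first four moments exactly:
$$\sum_i b_i=255,\quad \sum_i i\,b_i=1071,\quad \sum_i\binom{i}{2}b_i=2040,\quad \sum_i\binom{i}{3}b_i=2040.$$
Solving for $b_0,\dots,b_3$ in terms of $b_4,b_5$ gives in particular $b_2=6b_4+20b_5-4080$ and $b_3=2040-4b_4-10b_5$; imposing $b_2\ge0$ and $b_3\ge0$ yields $3b_4+10b_5\ge2040\ge4b_4+10b_5$, hence $b_4\le0$ and so $b_4=0$. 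Then $b_2,b_3\ge0$ force $b_5=204$, and back-substitution gives $b_0=b_2=b_3=b_4=0$, $b_1=51$, $b_5=204$. Thus the code has the two-weight distribution $A_{12}=204$, $A_{16}=51$ of the linear code, and, more importantly, every hyperplane of $V_8$ contains exactly one or exactly five codelines.

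It remains to deduce $\ef_4$-linearity from the Blokhuis--Brouwer criterion \cite{BB}, i.e.\ that every subset of codelines spans an even-dimensional space. Since any three codelines already span dimension $6$, the span of any larger subset lies between $6$ and $8$, so the only possible odd value is $7$; moreover if some subset spans dimension $7$, then taking any three of its members (which span a $6$-space) together with a fourth member lying outside their span, already four of them span dimension $7$. Hence the whole criterion reduces to the single statement that no four codelines span a hyperplane. I would attack this with the ``one or five'' dichotomy: if four codelines spanned a hyperplane $U$, then $U$ would contain at least four and therefore exactly five codelines, necessarily spanning $U$. Analysing this configuration of five pairwise skew lines inside $U\cong PG(6,2)$ through the codimension-two subspaces $W\subset U$ --- each of which lies in $U$ and in two further hyperplanes of $V_8$, on which the dichotomy applies --- yields a rigid system for the distribution of the five lines over the hyperplanes of $U$ that I expect to be inconsistent. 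This span-parity elimination is precisely the non-linearity obstruction and is the main obstacle of the proof; I anticipate finishing it by a short finite analysis of the admissible line distributions, in the computational spirit of Theorems \ref{1266lineartheorem} and \ref{1266addtheorem}. Once no four codelines span a hyperplane, \cite{BB} gives $\ef_4$-linearity, and the uniqueness of the linear code completes the argument.
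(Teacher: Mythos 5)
Your moment computation is correct and attractive: granting strength $3$, the four incidence equations do force $b_4=0$, $b_5=204$, $b_1=51$, $b_0=b_2=b_3=0$, hence the two-weight distribution $A_{12}=204$, $A_{16}=51$ and the dichotomy that every hyperplane contains exactly one or exactly five codelines; and your reduction of the Blokhuis--Brouwer criterion to the single statement ``no four codelines span a hyperplane'' is sound (any odd-dimensional span must have dimension $7$, and a violating four-subset can then be extracted inside it). The trouble is that the two steps you explicitly defer are not minor. First, strength $3$ itself: your averaging argument does dispose of intersecting (or repeated) pairs, but it fails for a degenerate codeline --- a point lies in $127$ hyperplanes, and the same count only produces hyperplanes carrying $5$ codeobjects, which is permitted by $d=12$ --- and, as you concede, for triples spanning only a $V_5$. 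The paper settles these cases by concatenation with the binary $[3,2,2]_2$-code plus the Griesmer bound (a point codeobject gives a $[16,3.5,12]_4$-subcode, hence a binary $[48,7,24]_2$-code, which does not exist) and by codepoint counting in small subspaces (a $V_4$ carries at most $6$ codepoints, while three lines in a $V_5$ would force $7$). These repairs exist, but you have not supplied them.

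Second, and decisively: the elimination of five pairwise skew codelines spanning a hyperplane $U\cong PG(6,2)$ is the entire content of the linearity claim, and you offer only the expectation that an analysis of codimension-two subspaces ``yields a rigid system'' that is inconsistent; nothing is actually derived. Note that this configuration is not locally impossible --- five mutually skew lines spanning a $PG(6,2)$ certainly exist --- so any contradiction must exploit the global structure (the one-or-five dichotomy over all $255$ hyperplanes together with the positions of the remaining twelve codelines), and your sketch gives no mechanism for doing so. The paper's own proof is evidence that this step is substantial: rather than proving linearity abstractly, it performs a quotient-space analysis (factoring by a plane spanned by a codeline and a point of a second codeline, studying a weight function on the quotient $PG(4,2)$) and then a computer classification, finding $246$ solutions of an intermediate line-packing problem of which exactly one extends to $17$ codelines. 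So your route is genuinely different --- deriving the two-weight dichotomy by moment counting is cleaner than the paper's codepoint bounds, and reducing to linearity is a strategy the paper itself endorses --- but as it stands the heart of the proof is missing, and your anticipated ``short finite analysis'' would most likely amount to a computer search comparable in scale to the paper's.
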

\begin{proof}
Assume at first that one of the $17$ codeobjects is a point.
This yields a subcode with parameters $[16,3.5,12]_4.$
Concatenation yields a binary $[48,7,24]_2$-code which however cannot exist
by the Griesmer bound. An analogous argument shows that, because of the non-existence
of a $[15,2.5,12]_4$-code, it is impossible that two codelines intersect.
It follows that the $17$ codelines form a partial spread of lines.
Let $M$ be the union of the points on codelines. Denote the
elements of $M$ by codepoints.
\par
Observe that the ambient space is a $V_8.$ The definition of the distance shows that
each hyperplane $V_7$ contains at most  $5\times 2+17=27$ codepoints.
Obvious counting arguments imply that each secundum $V_6$ has at most $15$ codepoints,
a $V_5$ has at most $9$ codepoints,
a $V_4$ has at most $6$ codepoints and
a $V_3$ has at most $4$ codepoints.
This implies that our code has strength $3$: any three codelines are in
general position. In fact, assume some three codelines are in a $V_5.$ Then the third of those lines
meets the $V_4$ generated by the first two in a point. We obtain
$7$ codepoints in a $V_4,$ contradiction.
\par
Let now $U$ be a plane $V_3$ containing a
codeline $L_1$ and an isolated codepoint on codeline $L_2.$
Consider the factor space $\Pi ,$  a $PG(4,2).$
For each point $P$ of $\Pi ,$ let the weight $w(P)$
be the number of codepoints in the preimage of $P$ which are outside $U.$
For each subspace $X$ of $\Pi$ let $w(X)=\sum_{P\in W}w(P).$
The distribution of codepoints on subspaces shows that $w(P)\leq 2$ for each
point $P$ of $\Pi .$ Also $w(\Pi )=47$ and each line $l$ has weight $w(l)\leq 5.$
Considering a point $P$ of weight $2$ and the lines of $\Pi$ passing through it, we see
that all weights of points are $1$ or $2$ and each line through $P$ contains precisely
one point of weight $1.$ It follows that $\Pi$ possesses a hyperplane $H_0$ all of whose
points have weight $1,$ where all affine points (outside $H_0$) have weight $2.$
\par
Observe that $\Pi$ has a special point $P_0$ of weight $2$ whose preimage contains
codeline $L_2.$ Because of strength $2$ each of the remaining $15$ codelines
(different from $L_1,L_2$) has as image a line of $\Pi .$
Denote those images by $M_1,\dots ,M_{15}.$ We have that $P_0$ is not contained
in any of those lines of $\Pi ,$ each point of $H_0$ is on precisely one of those
lines and each affine point different from $P_0$ is on two of those lines.
\par
Choose coordinates: let $v_1,\dots ,v_8$ be a basis of the ambient space,
$U=\langle v_1,v_2,v_3\rangle$ and use coordinates $y_1,\dots ,y_5$ for the
actor space $\Pi .$ As hyperplane choose $H_0=(y_1=0)$ and as special point
 $P_0=10000.$
An obvious calculation with $(5,5)$-matrices shows that
the first two lines $M_1,M_2$ can be chosen as $\langle 01000,10100\rangle$ and
$\langle 00010,10001\rangle .$ This leads to the first four lines in $PG(7,2)$ as follows:

$$\left(\begin{array}{c|c|c|c}
L_1 & L_2 & L_3 & L_4  \\
10 & 00 & 00 & 00   \\
01 & 00 & 00 & 00   \\
00 & 10 & 00 & 00   \\ \hline
00 & 01 & 01 & 01   \\
00 & 00 & 10 & 00  \\
00 & 00 & 01 & 00  \\
00 & 00 & 00 & 10    \\
00 & 00 & 00 & 01  \\
\end{array}\right)$$

This leads to the problem of finding a family $M_1,\dots ,M_{15}$ of lines in $\Pi$ such that
$M_1,M_2$ are as above, $P_0$ is on none of those lines, each of the remaining points
of weight $2$ is on two of the $M_i$ and each point of the hyperplane $H_0$ is on
precisely one of the $M_i.$
A computer program revealed that this problem in $PG(4,2)$ has precisely
246 different solutions. The completion to the set of 17 codelines succeeds only in one
case, the case of the elliptic quadric.
\end{proof}

\begin{Theorem}
\label{17134theorem}
An additive $[17,13,4]_4$-code is uniquely determined.
\end{Theorem}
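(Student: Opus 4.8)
The plan is to deduce the statement from the uniqueness of the elliptic quadric code (Theorem \ref{17412theorem}) by passing to the dual and invoking the Blokhuis--Brouwer characterization \cite{BB}. Let $\mathcal{C}$ be an additive $[17,13,4]_4$-code and let $\mathcal{D}=\mathcal{C}^\perp$ be its dual, an additive $[17,4]_4$-code. Since $\mathcal{C}$ is $\ef_4$-linear if and only if $\mathcal{D}$ is, and since a linear $[17,13,4]_4$-code is uniquely determined --- it is the dual of the unique $17$-cap in $PG(3,4)$, cf.\ \cite{cap4} and Theorem \ref{17412theorem} --- it suffices to prove that $\mathcal{D}$ is $\ef_4$-linear. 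The advantage of working with $\mathcal{D}$ is that it lives in the small space $V_8$, and that $d(\mathcal{C})=4$ translates into the dual statement that $\mathcal{D}$ has strength $3$: any three of its $17$ codelines are in general position, and in particular the codelines are pairwise skew. Geometrically $\mathcal{D}$ is an additive relaxation of a $17$-cap.

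By \cite{BB}, $\mathcal{D}$ is $\ef_4$-linear precisely when every subset of its codelines spans a binary subspace of even dimension. Strength $3$ already disposes of all subsets of size at most $3$, whose spans have dimensions $2,4,6$, while the full set of $17$ codelines spans $V_8$; the content lies in the subsets of intermediate size. The plan is to control these through the distribution of codelines on the hyperplanes $V_7$ of $V_8$. I would show that no hyperplane of $V_8$ contains six of the codelines of $\mathcal{D}$, so that every hyperplane misses at least $12$ of them and hence $d(\mathcal{D})\geq 12$. Combined with the bound $d(\mathcal{D})\leq 12$ coming from concatenation with the binary $[3,2,2]_2$-code and the Griesmer bound (a $[51,8,26]_2$-code does not exist), this yields $d(\mathcal{D})=12$, so that $\mathcal{D}$ is an additive $[17,4,12]_4$-code. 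Theorem \ref{17412theorem} then identifies $\mathcal{D}$ with the elliptic quadric, which is $\ef_4$-linear, and the even-dimension condition holds automatically.

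The main obstacle is exactly this hyperplane bound. Six pairwise skew codelines contained in a common $V_7$, with any three of them spanning a $V_6$, form a heavily constrained configuration, since each line beyond the first three can enlarge the span by at most one dimension inside the ambient hyperplane of dimension $7$; I expect that combining the strength-$3$ condition with a count of codepoints on the subspaces of $V_7$ rules it out. This step is delicate because a plane of $PG(2,4)$ can carry a six-point hyperoval, so the bound $5$ is genuinely a feature of the quadric and not of an arbitrary additive cap; failing a clean geometric argument, the configuration of $17$ skew lines of strength $3$ in $V_8$ can be enumerated by computer exactly as in the proof of Theorem \ref{17412theorem}. Once $\mathcal{D}$ is known to be the (linear) elliptic quadric, its dual $\mathcal{C}$ is $\ef_4$-linear, hence the uniquely determined linear $[17,13,4]_4$-code, which completes the proof.
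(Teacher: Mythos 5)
Your overall skeleton coincides with the paper's: dualize to a $[17,4]_4$-code $\mathcal{D}$ of strength $3$, show that its minimum distance must be $12$, then invoke Theorem \ref{17412theorem}. (The detour through Blokhuis--Brouwer in your first paragraph is never needed: once $d(\mathcal{D})=12$ is established, Theorem \ref{17412theorem} already identifies $\mathcal{D}$ with the linear elliptic quadric code, and linearity and uniqueness of $\mathcal{C}$ follow by duality; likewise the Griesmer upper bound $d(\mathcal{D})\leq 12$ is superfluous, since ``distance at least $12$'' already puts you in the hypotheses of Theorem \ref{17412theorem}.) The genuine gap is that the decisive step --- no hyperplane of $V_8$ contains six or more of the $17$ codelines --- is exactly where your argument stops being a proof. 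The local counting argument you hope for provably cannot exist in the form you describe: six pairwise skew lines in a hyperplane, any three in general position, together with the $11$ points in which the remaining codelines meet that hyperplane, form a $(6,11)$-set of strength $3$ in $PG(6,2)$ in the sense of Definition \ref{nmsetdef}, and such sets \emph{do} exist. The paper's computer classification finds exactly seven projective classes of them (six arising from the hexacode, one from a second six-line family). So the obstruction is not local to the hyperplane; it is the global fact, established in the paper by exhaustive search, that none of these seven configurations can be completed to $17$ lines of strength $3$. The case of seven or more lines in a hyperplane is likewise not handled by counting but by citing the nonexistence of $(7,11)$-sets of strength $3$ in $PG(6,2)$ proved in \cite{noquant1354}.

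Your fallback --- enumerate the configurations of $17$ pairwise skew lines of strength $3$ in $V_8$ ``exactly as in the proof of Theorem \ref{17412theorem}'' --- is not the same computation. The search in Theorem \ref{17412theorem} was run under the hypothesis $d=12$, which supplies strong counting constraints (at most $27$ codepoints per hyperplane, at most $15$ per secundum, and so on) and, via the factor-space analysis, reduces the problem to $246$ candidate line families in $PG(4,2)$. Without the distance hypothesis none of those constraints is available, and what you propose to enumerate is precisely the set of all additive $[17,13,4]_4$-codes --- that is, the theorem itself, attacked by brute force over a vastly larger search space. To close the gap along the paper's lines you need its intermediate objects: assume a hyperplane contains at least six codelines, pass to the mixed line-point trace configuration in that hyperplane, classify those small $(n,m)$-sets, and verify non-completability.
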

\begin{proof}
The dual of our additive $[17,13,4]_4$-code is a
$[17,4]$-code of strength $3.$ Because of Theorem \ref{17412theorem}
we can assume that it has distance $\leq 11,$ meaning
that at least some $6$ codelines are on a hyperplane.
\par
Assume there are $7$ lines on a hyperplane.
This would yield a $(7,11)$-set of strength $3$ in $PG(6,2)$ (see Definition \ref{nmsetdef}).
We saw in \cite{noquant1354} that such a set does not exist.
It follows that we must have a $(6,11)$-set of strength $3$ on a hyperplane $H.$
Those sets were determined by a computer program. There are two families of $6$ lines each, the hexacode
$[6,3,4]_4$
and another code. The hexacode yields $6$ classes of $(6,11)$-sets, the other
family of six lines yields a unique $(6,11)$-set. An exhaustive search showed that
none of those sets could be completed to a set of $17$ lines of strength $3$.
This shows that a $[17,13,4]_4$-code is necessarily the dual of a $[17,4,12]_4$-code.
\end{proof}

\section{Additive codes of larger length}
\label{lastsection}

\begin{Theorem}
\label{no18144theorem}
There is no additive $[18,14,4]_4$-code.
\end{Theorem}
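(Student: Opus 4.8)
The plan is to reduce the nonexistence of an additive $[18,14,4]_4$-code to the already-established uniqueness of the elliptic quadric, exactly in the spirit of how the text remarked that ``the additive relaxation of an $18$-cap in $PG(3,4)$ does not exist.'' First I would pass to the dual object: an additive $[18,14,4]_4$-code has as its dual a $[18,4]_4$-code of strength $4$. As in the proof of Theorem \ref{17134theorem}, I would argue that if this dual code actually has minimum distance $d$, then shortening or puncturing a single coordinate produces a $[17,4]$-code of strength $3$, which by Theorem \ref{17412theorem} (via its dual, the uniquely determined $[17,13,4]_4$-code of Theorem \ref{17134theorem}) must be the elliptic-quadric configuration. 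So the whole question becomes: can the elliptic quadric's $17$ codelines in $V_8 = PG(7,2)$ be extended by one further codeline $L_{18}$ so that strength $3$ is preserved?

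The key steps, in order, are as follows. First, establish that every additive $[18,14,4]_4$-code must contain a $[17,13,4]_4$-subcode (by deleting one coordinate) whose dual is forced to be the $[17,4,12]_4$-code of the elliptic quadric; here I lean on the fact that the earlier theorems pin that code down uniquely. Second, translate strength $3$ for the $18$-line family into the geometric statement that any three of the $18$ codelines span a $V_6$ (equivalently, no three lie in a $V_5$); combined with the distance bounds derived in the proof of Theorem \ref{17412theorem} (a $V_4$ carries at most $6$ codepoints, a $V_3$ at most $4$), the existing $17$ lines already use up the configuration of the elliptic quadric. Third, analyze the candidate eighteenth line $L_{18}$: its two points, together with the $17\times 2 = 34$ points already present, must avoid creating any plane $V_3$ with $5$ or more codepoints and any $V_4$ with $7$ or more. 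Because the $17$ codelines form the elliptic quadric, the points of $M$ occupy a very rigid position, and I would count how the points on a hypothetical $L_{18}$ interact with the secanta and tangents of the quadric.

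The main obstacle I anticipate is the final counting step: showing that no line $L_{18}$ can be added. The clean way is to invoke the concatenation trick that the text flagged just before this section — concatenating an additive $[n,k,d]_4$-code with the binary $[3,2,2]_2$-code yields a binary $[3n,2k,2d]_2$-code. Applied here, an additive $[18,14,4]_4$-code would give a binary $[54,28,8]_2$-code; I would then check this against the Griesmer bound or the known tables of optimal binary codes to derive a contradiction, paralleling the Griesmer argument used in the proof of Theorem \ref{17412theorem}. If the concatenation bound is not by itself tight enough, the fallback is the direct geometric argument above, where the hard part is verifying exhaustively (possibly with the same kind of short computer check used for the $246$ solutions in Theorem \ref{17412theorem}) that each of the $2^7 - 1 = 127$ candidate points for extending the quadric violates one of the subspace capacity bounds. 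I expect the concatenation-plus-Griesmer route to be the shortest and cleanest, so I would present that as the primary proof and mention the geometric extension argument only as corroboration.
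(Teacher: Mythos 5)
Your reduction step is the same as the paper's: pass to the dual $[18,4]_4$-code of strength $3$ (note: strength $3$, not $4$ --- minimum distance $d$ gives dual strength $d-1$), observe that any $17$ of its $18$ codelines form a $[17,4]$-code of strength $3$, and invoke Theorem \ref{17134theorem} to conclude that these $17$ lines must be the dual configuration of the elliptic quadric. Up to that point you agree with the paper.

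The genuine gap is in what you call the primary proof. The concatenation-plus-Griesmer route cannot work here. Concatenating an additive $[18,14,4]_4$-code with the binary $[3,2,2]_2$-code does give a binary $[54,28,8]_2$-code, but the Griesmer bound for these parameters is $\sum_{i=0}^{27}\lceil 8/2^i\rceil = 8+4+2+1+24 = 39 \leq 54$, so there is no contradiction; worse, binary $[54,28,8]_2$-codes actually exist (for instance, shortening the extended BCH $[64,45,8]_2$-code ten times already gives a $[54,35,8]_2$-code), so no table of optimal binary codes will rescue the argument either. The trick succeeded in the proof of Theorem \ref{17412theorem} only because there the dimension was tiny ($7$) relative to the distance ($24$); with dimension $28$ and distance $8$ the bound is hopelessly weak. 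This forces you back onto your ``fallback,'' which is in fact the paper's actual proof: an exhaustive check that no line of $PG(7,2)$ can be adjoined to the $17$ elliptic-quadric lines while preserving strength $3$. Your description of that search is also off: the candidates are the lines of $PG(7,2)$ (the ambient space $V_8$ has $2^8-1=255$ points and $255\cdot 254/6 = 10795$ lines), not ``$2^7-1=127$ candidate points,'' and the hand-counting via secants and tangents of the quadric is sketched but never carried out. So as it stands, neither of your two routes closes the argument: the first fails outright, and the second is the paper's computer verification, which you would still need to perform.
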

\begin{proof}
 Consider the dual, an additive $[18,4]_4$-code
of strength $3.$ The first $17$ of those lines describe a
$[17,4]_4$-code of strength $3.$ By Theorem \ref{17134theorem}
this is the dual of the elliptic quadric code. It suffices therefore to
start from this set of $17$ lines and to check by a simple computer
program that there is no line in $PG(7,2)$ which completes it to a
$[18,4]_4$-code
of strength $3.$
\end{proof}

Finally we construct an example of a $[22,17.5,4]_4$-code.
In fact we work with the dual, a $[22,4.5]_4$-code of strength $3.$
We work in $PG(8,2).$ The underlying vector space is $V(9,2)$ with
basis $e_1,\dots ,e_9.$ Our code consists of the lines
$L_1,\dots ,L_{22}.$ It contains two copies of the hexacode,
one consisting of
$$L_1=\langle e_1,e_2\rangle , L_2=\langle e_3,e_4\rangle , L_3=\langle e_5,e_6\rangle ,
L_4=\langle e_2+e_3+e_6,e_1+e_2+e_4+e_5\rangle ,$$
$$L_5=\langle e_2+e_4+e_5+e_6,e_1+e_3+e_5\rangle , L_6=\langle e_2+e_3+e_4+e_5 ,e_1+e_4+e_6\rangle$$
forming the hexacode in the subspace $\langle e_1,\dots ,e_6\rangle ,$
the second consisting of
$$L_1,L_2,L_7=\langle e_2+e_8,e_3+e_4+e_7+e_8\rangle,L_8=\langle e_3+e_4+e_8,e_1+e_3+e_7\rangle ,$$
$$L_9=\langle e_1+e_2+e_7,e_1+e_3+e_8\rangle ,L_{10}=\langle e_4+e_7,e_1+e_2+e_7+e_8\rangle$$
in the space $\langle e_1,e_2,e_3,e_4 ,e_7,e_8\rangle .$
Let $H$ be the hyperplane $\langle e_1,\dots ,e_8\rangle$ generated by the hexacodes. One further codeline
$L_{11}=\langle e_2+e_3+e_6+e_8,e_3+e_4+e_5+e_7\rangle$ is contained in $H,$ the remaining codelines are

$$\left(\begin{array}{c|c|c|c|c|c|c|c|c|c|c}
L_{12} & L_{13} & L_{14} & L_{15} & L_{16} & L_{17} & L_{18} & L_{19} & L_{20} & L_{21} & L_{22} \\
01 & 00 & 00 & 10 & 00  & 10  & 10  & 10  & 01  & 10 & 01 \\
11 & 01 & 01 & 00 & 00  & 00  & 00  & 11  & 10  & 10 & 01 \\
01 & 01 & 00 & 10 & 01  & 11  & 01  & 10  & 01  & 10 & 10  \\
10 & 00 & 00 & 10 & 11  & 10  & 10  & 00  & 10  & 00 & 00 \\
10 & 11 & 10 & 11 & 10  & 11  & 00  & 11  & 10  & 10 & 00 \\
10 & 10 & 11 & 01 & 00  & 10  & 10  & 11  & 00  & 01 & 10 \\
00 & 10 & 01 & 01 & 01  & 11  & 00  & 10  & 11  & 10 & 11\\
10 & 01 & 10 & 10 & 10  & 10  & 11  & 10  & 00  & 01 & 01 \\ \hline
01 & 01 & 01 & 01 & 01  & 01  & 01  & 01  & 01  & 01 & 01 \\
\end{array}\right).$$

\end{document}